\newcommand{\ud}[1]{\, \mathrm{d}#1}
\newcommand{\deriv}[3][]{\frac{\ud^{#1} \hspace{-0.3mm} #2}{\ud{#3}^{#1}}}
\newcommand{\grad}{\nabla }
\renewcommand{\P}{ \mathbb P }
\newcommand{\EXP}{\mathbb{E}}
\newcommand{\eps}{\epsilon}
\newcommand{\R}{\mathbb{R}}
\newcommand{\A}{\mathcal{A}}
\newcommand{\Dom}{\text{Dom}}
\renewcommand{\geq}{\geqslant}
\renewcommand{\leq}{\leqslant}
\renewcommand{\d}{\partial}
\newcommand{\tr}{\intercal}
\newcommand{\bs}{\boldsymbol}
\newcommand{\fP}{f_{\Pb}}
\newcommand{\fY}{f_{\Yb_{\! 2}}}
\newcommand{\fYt}{\tilde{f}_{\Yb_{\! 2}}}
\newcommand{\fPt}{\tilde{f}_{\Pb}}
\newcommand{\Qb}{\boldsymbol Q}
\newcommand{\xb}{\boldsymbol x}
\newcommand{\ab}{\boldsymbol a}
\newcommand{\vpb}{\boldsymbol \varphi}
\newcommand{\yb}{\boldsymbol y}
\renewcommand{\sb}{\boldsymbol s}
\newcommand{\zb}{\boldsymbol z}
\newcommand{\Xb}{\boldsymbol X}
\newcommand{\Yb}{\boldsymbol Y}
\newcommand{\Rb}{\boldsymbol R}
\newcommand{\Pb}{\boldsymbol P}
\newcommand{\mb}{\boldsymbol{m}}
\newcommand{\Momr}[1]{M^{(#1)}_{Q_r}}
\newcommand{\MomP}[1]{M^{(#1)}_{P}}
\newcommand{\edge}[1]{e\oldstylenums{#1}}
\newcommand{\Rnn}{\R_+^{2n_{\Gamma}}}
\newcommand{\Rn}{\R_+^{n_{\Gamma}}}
\newcommand{\AG}{\boldsymbol{\Lambda}_\Gamma}
\newcommand{\Mb}{\mathbf{M}}
\newcommand{\Kb}{\mathbf{K}}
\newcommand{\Hb}{\mathbf{H}}
\newcommand{\Ib}{\mathbf{I}}
\newcommand{\gt}{\tilde{g}}
\begin{document}

\markboth{Authors' Names}{Paper's Title}

\catchline{}{}{}{}{}

\title{DYNAMICS OF DRAINAGE UNDER STOCHASTIC RAINFALL IN RIVER NETWORKS }

\author{JORGE M RAMIREZ }

\address{Escuela de Matem\'{a}ticas, Universidad Nacional de Colombia,\\
Calle 59A No 63-20, Medell\'{i}n, Colombia, 050034,\\ jmramirezo@unal.edu.co}

\author{CORINA CONSTANTINESCU}

\address{Mathematical Sciences, University of Liverpool, L69 3 BX\\
Liverpool, United Kingdom\\
C.Constantinescu@liverpool.ac.uk}

\maketitle

\begin{history}
\received{(Day Month Year)}
\revised{(Day Month Year)}
\end{history}

\begin{abstract}
We consider a linearized dynamical system modelling the flow rate of water along the rivers and hillslopes of an arbitrary watershed. The system is perturbed by a random rainfall in the form of a compound Poisson process. The model describes the  evolution, at daily time scales, of an interconnected network of linear reservoirs and takes into account the differences in flow celerity between hillslopes and streams as well as their spatial variation. The resulting stochastic process is a piece-wise deterministic Markov process of the Orstein-Uhlembeck type. We provide an explicit formula for the Laplace transform of the invariant density of streamflow in terms of the geophysical parameters of the river network and the statistical properties of the precipitation field. As an application, we include novel formulas for the invariant moments of the streamflow at the watershed's outlet, as well as the asymptotic behavior of extreme discharge events.
\end{abstract}

\keywords{Rainfall runoff process; Ornstein-Uhlenbeck type; Invariant distribution of discharge.}

\ccode{AMS Subject Classification: 60G51, 60H10, 37H10}

\section{Introduction}\label{Sec_Intro}	

River networks are a chief example of interconnected dynamical systems operating under stochastic forcing. The emerging properties of these systems define the process by which rainfall is converted into river discharge through the accumulation of hillsolpe runoff. Therein lies the fundamental problem of hydrology. Uncertainty plays a key role too, and the pronounced temporal variability of runoff and discharge reflects the random character of key hydrologic fluxes \citep{botter2007probabilistic}.

We propose and solve a stochastic differential model for the streamflow and subsurface runoff throughout an arbitrary watershed under a random precipitation field. The proposed conceptual model uses the linearized equations for conservation of mass and momentum on each river and hillslope proposed by  \cite{gupta1998spatial, rodriguez1999probabilistic}, and aggregates them according to the geometry of the river network. The focus of this paper is to derive the necessary mathematical details leading to the solution of the equations and some interesting initial consequences. Our results include estimates on invariant moments of discharge and asymptotics of extreme events.

The time scales of interest are of days or longer, hence precipitation events are assumed instantaneous. The spatial scale is arbitrary, with individual stream links and hillslopes considered as interconnected linear reservoirs, whereas the detailed dynamics of soil moisture, infiltration or evapotranspiration are neglected. The proposed model does take into account the kinematic delay among the contributions originating from different sub-basins. Moreover, hydraulic parameters are specified in the model at individual stream links and hillslopes, effectively incorporating variations in celerity due to location or scale.

Among the calculations presented here are explicit expressions for the Laplace transform of the densities of both, the transition probabilities and the unique invariant distribution of the process $ \Xb $. See Proposition \ref{Prop_MainResult} below. These expressions completely characterize the distribution of the runoff and discharge within the basin for all times, as well as its behavior as $ t \to \infty $. They also explicitly show how the uncertainty associated with the precipitation interacts with the geometry of the river network, and is converted into the uncertainty of discharge and runoff.

As an application, we obtain formulas for the $ n $-th moment of the streamflow at the basin outlet, and the asymptotic behavior of the probabilities of extreme discharge events. The analysis also yields a novel family of geomorphological coefficients that completely characterizes the invariant distribution of $ \Xb $.

\section{Description of the model}\label{Sec_fullModel}

Natural river networks can be modeled as finite directed rooted binary trees \cite{kovchegov2018random}. Let $ \Gamma $ denote such a tree  modeling a river network as in Figure \ref{Fig_Basin}. The edges of $ \Gamma $ are called `links', there are $ n_\Gamma $ of them, and each is denoted in general by the letter $ e $. The most downstream edge, or root, is always denoted by $ r $. The hillslope area that drains through the downstream end of link $ e $ is denoted by $ a_e $. We denote the vector of areas by $ \ab $ and the total watershed area by $ a $: 
\begin{equation}\label{Def_Areas}
\ab := [a_e: e \in \Gamma]^{\tr}, \;\; a := \sum_{e \in \Gamma} a_e.
\end{equation}  
For any time $ t \geq 0 $, the quantities of interest are: the total subsurface runoff $ R_e(t) $ from the hillslopes into the link $ e $, and the streamflow $ Q_e(t)$ at its downstream end, both in units of volume per unit time.

\begin{figure}[h] \centering
	\includegraphics[scale=0.7]{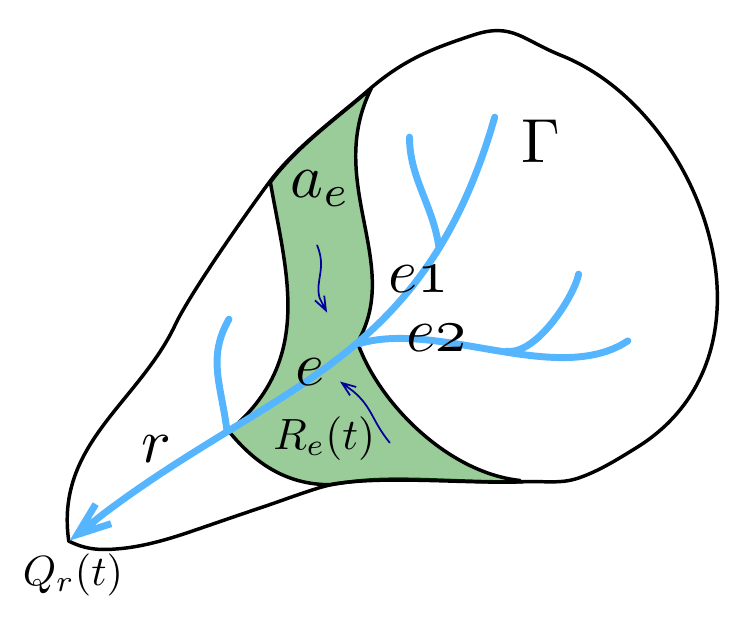}
	\caption{\small{
			Schematic representation of a river network $ \Gamma $ and its components with $ n_{\Gamma} = 9 $ streams.
	}} \label{Fig_Basin}
\end{figure}

Our main dynamical assumption rests on the linearized stream-based conservation equations proposed in \citet{gupta1998spatial} and the linearized subsurface water storage equation of \cite{rodriguez1999probabilistic}. Namely, if the link $ e $ has tributaries $ \edge{1} $ and $ \edge{2}$, then
\begin{align}
\deriv{Q_e}{t}  &= K_e (-Q_e + Q_{\edge{1}} + Q_{\edge{2}} + R_e), \label{Eq_ODEsQ}\\
\deriv{R_e}{t} &= - H_e \left( R_e + a_e \deriv{P_e}{t} \right) \label{Eq_ODEsR}
\end{align} 
The \textit{inverse residence times} $ K_e $ and $ H_e $ play a pivotal role in what follows, and are obtained by supposing that the total storage in stream $ e $ and its hillslope are $ \frac{1}{K_e}Q_e(t) $ and $ \frac{1}{H_e}R_e(t) $ respectively. Typical values of $ H_e $ and $ K_e $ may be deduced from related parameterizations in the literature. \cite{rodriguez1999probabilistic} for instance, used $ H_e = \frac{k_s}{n z} $ where $ k_s $ is the hydraulic conductivity of the saturated soil, $ n $ is the porosity and $ z $ the soil depth. In a more recent work, \cite{rupp2006use} propose a power-law model for $ R $ which, in the limit of homogeneous hydraulic conductivity, can be linearized to equation \ref{Eq_ODEsR} with $ H_e \approx \frac{k_s L_e \sin(i)}{n a_e}$. Here $ L_e $ is the length of the stream and $ \tan(i) $ is the slope of the hillslope. A common approximation to $ K_e $ is given for the classical Mukingum method of flood routing as $ K_e \approx 1.5 v/L_e $, where $ v $ is the average velocity in the channel \cite{dooge1973linear}. See also \cite{Mantilla:2011co}. Using typical values, one arrives at the following ranges for the non-dimensional quantities of interest \cite[see also][Figure 4]{botter2007basin}:
\begin{equation}
\frac{H_e}{K_e} \sim 10^{-3} - 10^{0}, \quad \frac{\lambda}{H_e} \sim 10^{-3} - 10^{0}.
\end{equation}

The stochastic process of interest will be denoted by $ \Xb $ and is obtained by concatenating two $ n_{\Gamma} $-dimensional time dependent column vectors: $ \Qb(t) = [Q_e(t): e\in \Gamma]^{\tr} $ containing the total streamflow at the most downstream point of each link, and $ \Rb(t) = [R_e(t): e\in \Gamma]^{\tr} $ with the total subsurface runoff from the hillslopes into each corresponding stream link. The state space of $ \Xb $ is therefore $ \Rnn := (0,\infty)^{2n_{\Gamma}} $. We thus write
\begin{equation}\label{Def_X}
\Xb(t) := \left[\begin{array}{c}
\Qb(t)\\ \Rb(t) 
\end{array} \right] \in \R_+^{2 n_\Gamma}, \quad t \geq 0,
\end{equation}  
which exemplifies our notation for vectors in $ \Rnn $ in terms of their $ n_\Gamma $-dimensional sub-vectors,
\begin{equation}\label{Def_Notationxs}
\xb = \left[\begin{array}{c}
\xb_1 \\ \xb_2
\end{array}\right] \in \Rnn, \quad \xb_i = [x_{i,e}: e\in \Gamma]^{\tr} \in \Rn, \quad i=1,2.
\end{equation}   
In addition, we follow the convention of having the first entries of both $ \xb_1 $ and $ \xb_2 $ correspond to the root edge $ r $. In particular, $ X_1(t) = Q_r(t) $ and $ X_{n_\Gamma + 1}(t) = R_r(t) $ in \eqref{Def_X}.

The uncertainty in the model comes solely from the precipitation field $ P_e $ in \eqref{Eq_ODEsR} which, at the daily or larger time scales of interest here, can be approximated by a $ n_{\Gamma} $-dimensional compound Poisson process $ P_e $ of increments $ \{P_{n,e}: n \geq 1\} $ \citep{rodriguez1999probabilistic}. The term $ \ud P_e/\ud t $ in \eqref{Eq_ODEsR} is thus to be understood as a generalized derivative. The basin-wide precipitation process may be written as the following point process
\begin{equation}\label{Def_P}
\Pb(t) := \sum_{n=1}^{N(t)} \Pb_{\! n} \, \delta_{T_n}(t)
\end{equation}  
where the storm times $ \{T_n: n\geq 1\} $ define a Poisson process $ N $ with fixed intensity $ \lambda >0 $,
\begin{equation}\label{Def_Nt}
N(t) := \sup\{n\geq 1: T_n \leq t\}.
\end{equation}
In \eqref{Def_P}, the symbol $ \delta_{T_n} $ denotes the Dirac-delta function concentrated at instant $ T_n $, and $ \{\Pb_n: n \geq 1\} $ is a sequence of random i.i.d vectors 
\begin{equation}\label{Def_Pn}
\Pb_n :=  [P_{n,e}: e \in \Gamma]^{\tr} \in \R_+^{n_\Gamma},
\end{equation}  
each with joint probability density function
\begin{equation}\label{Def_fP}
\fP(\yb) \ud \yb = \P( P_{n,e} \in \ud y_e: e \in \Gamma), \; \yb \in \R_+^{n_\Gamma}.
\end{equation}  

The above formulation concerns the case in which a watershed is subject to a random, yet \textit{statistically stationary precipitation regime}. The constant $ \lambda >0 $ gives the average number of precipitation events per unit time. At time $ T_n $, and independently of everything else, the $ n $-th precipitation event occurs instantaneously dropping a random column of water $ P_{n,e} $ onto the hillslopes surrounding link $ e $. For fixed $ n $, the joint distribution of the vector $[ P_{n,e}, e \in \Gamma]^{\tr} $ is given by $ \fP $. Within the specification of $ \fP $ one can, therefore, include any kind of statistical dependence between the precipitation intensities at different locations throughout the watershed. In particular, one may take the uniform case $ P_{n,e} = P_n $ for all $ e \in \Gamma $ as detailed in Remark \ref{Rem_UnifRain}.

We continue the mathematical formulation by introducing the following matrices 
\begin{equation}
\Kb := \text{ diag}(K_e: e \in \Gamma), \;  \; \Hb := \text{ diag}(H_e: e \in \Gamma), \label{Def_KH}
\end{equation}
and the \textit{incidence matrix}  $ \AG $ of the network $ \Gamma $, which is constructed as follows: $ (\AG)_{e,e} = 1 $ for all links $ e \in \Gamma $ and, if link $ e $ has tributaries $ \edge{1}, \edge{2} $, then 
\begin{equation}\label{Def_IncMatrix}
(\AG)_{e, \edge{1}} = (\AG)_{e, \edge{2}} =-1,
\end{equation}  
with all other entries equal to zero. The matrix $ \AG $ encodes all the topological information of the river network.

The system of equations \eqref{Eq_ODEsQ}-\eqref{Eq_ODEsR} can be written as the following linear stochastic differential equation for $ \Xb $:
\begin{equation}\label{Eq_SDEX}
\ud \Xb(t) = \Mb \Xb(t) \ud t + \ud \Yb(t),
\end{equation}  
where $ \Mb $ is the block matrix 
\begin{equation}\label{Def_M}
\Mb := \left[
\begin{array}{cc}
-\Kb\AG & \Kb\\
\mathbf{O} & -\Hb 
\end{array}
\right] \in \R^{n_\Gamma \times n_\Gamma},
\end{equation}  
and the driving process $ \Yb $ is the $2n_\Gamma $-dimensional compound Poisson process given by
\begin{equation}\label{Def_Yb}
\Yb(t) := \sum_{n=1}^{N(t)} \left[ 
\begin{array}{c}
\mathbf{0}\\ \Hb( \ab \circ \Pb_n)
\end{array}\right].
\end{equation}  
In \eqref{Def_M} and \eqref{Def_Yb}, $ \mathbf{O} $ denotes the $ n_\Gamma \times n_\Gamma $ matrix whose entries are all zero, $ \mathbf{0} $ denotes the $ n_\Gamma $-dimensional zero vector, and for vectors $\xb, \yb \in \R_+^{n_\Gamma}$, 
\begin{equation}
\xb \circ \yb := [x_e y_e: \; e \in \Gamma]^{\tr}
\end{equation}  
denotes the component-wise or Hadamard product operator. 

The defintion of $\Yb $ in \eqref{Def_Yb} makes explicit the assumption that rainfall falls exclusively over the hillslopes and affects  $ \Qb $ only through $ \Rb $. The probability density of each vector in the summation \eqref{Def_Yb} is denoted by: 
\begin{align}
f_{\Yb}(\yb) &= f_{\Yb}(\yb_1,\yb_2) :=  
\begin{cases}
\fY(\yb_2), & \yb_1 = \mathbf{0}\\
0, & \text{otherwise}.
\end{cases}, \; \yb \in \Rnn,  \text{  with} \label{Def_fY}\\
\fY(\xb) &:= \dfrac{1}{\prod_{e \in \Gamma} H_e a_e}\fP\left(\dfrac{\xb}{\Hb \ab} \right), \; \xb \in \Rn \label{Def_fY2},
\end{align} 
with $ \fP $ as in \eqref{Def_fP}. In \eqref{Def_fY2} and below, division of a vector by $ \Hb \ab $ simply  denotes component-wise multiplication by $ [1/(H_e a_e): e \in \Gamma]^{\tr} $.

\section{Analysis of $ \Xb $}
An explicit solution to \eqref{Eq_SDEX} is 
\begin{equation}\label{Eq_SolXt}
\Xb(t) =  e^{\Mb t} \Xb(0)+ \sum_{n=1}^{N(t)}  e^{\Mb(t-T_n)} 
\left[
\begin{array}{c}
\mathbf{0}\\ \Hb( \ab \circ \Pb_n)
\end{array}\right].
\end{equation}  
Namely, $ \Xb $ is a \textit{Piecewise Deterministic Markov Process} (PDMP) \citep{davis1984piecewise}. The sample paths of $ \Xb $ evolve between storm events according to the deterministic map 
\begin{equation}\label{Def_flow}
\vpb(t) := e^{\Mb t}, \quad  t \geq 0,
\end{equation}  
and at times $\{ T_n : n \geq 1 \}$ each component of $ \Rb(t) $ jumps by a random amount at each $ T_n $, while those of $ \Qb(t) $ suffer a discontinuity in the derivative. See Figures \ref{Fig_simul1}a and \ref{Fig_fullOrder4}b. Equivalently, the sample paths of $ \Xb $ can be written as the following stochastic convolution integral:
\begin{equation}\label{Eq_SolXtvp}
\Xb(t) = \int_0^t \vpb(t-s) \ud \Yb_0(s).
\end{equation}   
where we have conveniently defined, $ T_0 := 0 $, $ \ud \Yb_0(0) := \Xb(0) $, $ \ud \Yb_0(s) := \ud \Yb(s) $ for $ s>0 $.

\begin{figure}[h] \centering
	\hspace*{-0.5cm}\includegraphics[scale=0.7]{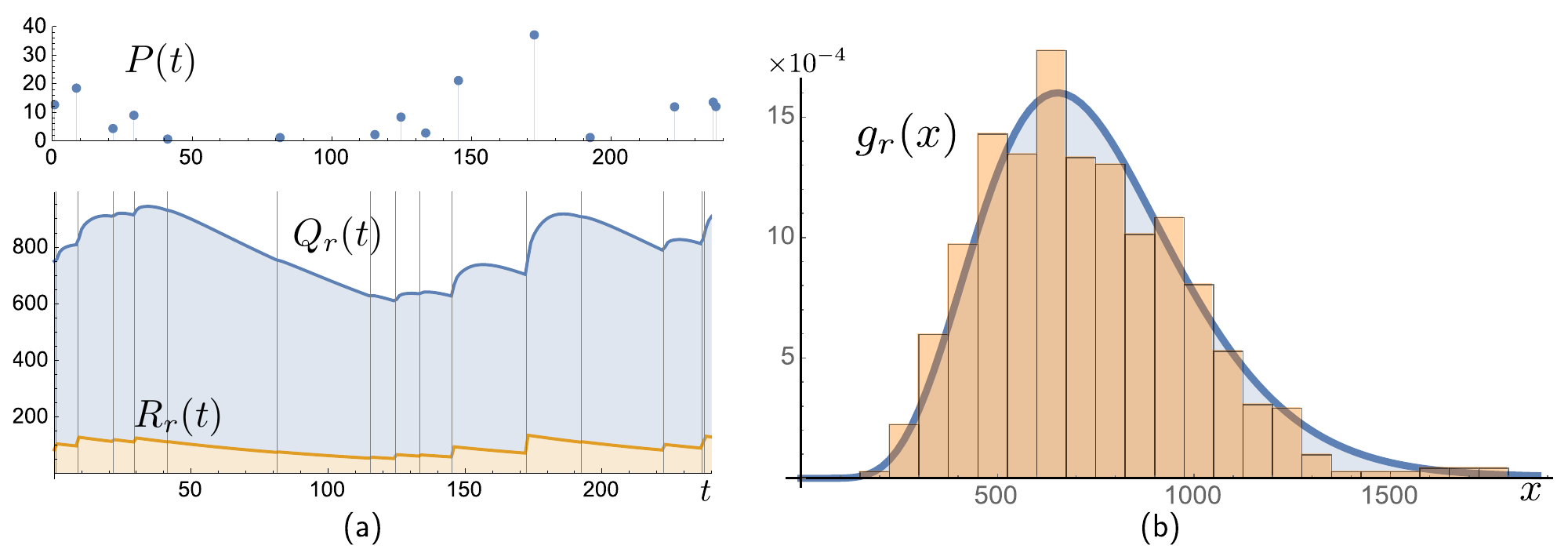}
	\caption{
		{\small
			(a) Simulated $ P(t) $ (\SI{}{\milli\meter}),  $ Q_r(t) $ and $ R_r(t) $ (\SI{}{\liter\per\second}) for the river network in Figure \ref{Fig_Basin}, $ t \in [0,\SI{240}{\hour}]$. All hillslope areas were assumed equal $ a_e = \SI{0.6}{\kilo\meter^2}$.  Values of $ K_e/K_r $ and $ H_e/K_r $ were randomly generated within the intervals $ (0,1) $ and $ (0,10^{-3}) $ respectively, with $ K_r =\SI{2}{\per\hour} $. Storms were assumed uniform in space with $ \lambda = 1/24 \SI{}{\per \hour}$, and $ P_n = P_{n,e} \sim \exp(\sigma)$ with mean $ \tfrac{1}{\sigma} = \SI{5}{\milli\meter}$. The initial condition was taken to be the invariant mean $ \Xb(0) = \EXP_g(\Xb) $ of equation \eqref{Eq_EXPgX}, $ \EXP_g Q_r = \SI{750}{\liter\per\second} $. (a) Using the same data: invariant probability density function of $ Q_r $ obtained by numerically inverting the Laplace transform $ \gt_r $ in \eqref{Eq_MainResultUR}; and histogram obtained from samples of the simulation of $ Q_r(t) $ in (a) extended to $ t \in [0,200\times 24\SI{}{\hour}]$}
	}\label{Fig_simul1}
\end{figure}

\begin{remark}[Connections to the unit and geomorphic hydrographs]\label{Rem_hydrograph}
	 The function $ \vpb(t) = e^{\Mb t} $ in \eqref{Def_flow} can be appropriate called a `global hydrograph' both in the context of the classical Unit Hydrograph Theory of \cite{dooge1959general}, as well as within the more recent Geomorphologic Instantaneous Unit Hydrograph (GIUH) theory introduced by \citet{gupta1980representation}. We have added the adjective `global' to emphasize that $ \vpb $ is a matrix function that simultaneously contains the hydrographs from all streams and hillslopes in the catchment. Let $ \bs{\Theta}(t) \in \Rnn$ be the vector containing unit hydrographs at the downstream end of each link and hillslope corresponding to a unitary homogeneous precipitation event over all of $ \Gamma $. Then from \eqref{Eq_SolXt},
	\begin{equation}\label{Eq_Theta1}
	\bs{\Theta}(t) =e^{\Mb t} \left[
	\begin{array}{c}
	\mathbf{0}\\ \frac{1}{a} \Hb \ab
	\end{array}
	\right]. 
	\end{equation}
	Now, consider a GIUH model where the travel times over the hillslopes and streams corresponding to link $ e $ are taken as independent exponential random variables with respective densities
	\begin{equation}
	h_{e}(t) := H_{e} e^{-H_{e} t}, \quad k_{e}(t) := K_{e} e^{-K_{e} t}.
	\end{equation}
	Following \cite{gupta1980representation}, one obtains the representation
	\begin{equation}\label{Eq_Theta2}
	\bs{\Theta}(t) = \left[  
	\begin{array}{c}
	\left[ \sum_{e' \in \Gamma_e} \frac{a_{e'}}{a}(h_{e'}*k_{e'}*\cdots*k_e)(t)  :\; e \in \Gamma\right]^{\tr}\\ \
	\left[ \frac{a_e}{a} h_e(t) :\; e \in \Gamma\right]^{\tr}
	\end{array}
	\right]
	\end{equation}
	where $ \Gamma_e $ denotes the subnetwork with $ e $ as its outlet ($ \Gamma_r:=\Gamma$), and each summand contains convolutions following the flow path that starts at the hillslope surrounding $ e' $ and ends in stream $ e $. The fact that \eqref{Eq_Theta1} and \eqref{Eq_Theta2} coincide can be shown by differentiating the right hand side of \eqref{Eq_Theta2}.
\end{remark}

\begin{remark}[Boundary behavior]\label{Rem_boundary}
	Much of the classical treatment of PDMPs, e.g. \cite{davis1984piecewise, Rolski:1999aa}, deals with the behavior of the process at, and out of the boundary of the state space. In our case, $ \d \Rnn = \{\xb \in \Rnn: x_{i,e} = 0  \text{ for some } i=1,2, \; e \in \Gamma\}$, which is inaccessible from $ \Rnn $. Since we also refrain from considering the evolution of the process for initial conditions $ \Xb(0) \in \d\Rnn $, the boundary behavior of $ \Xb $ needs not to be specified.
\end{remark}

Denote the family of Markov transition probabilities of $ \Xb $ by
\begin{equation}\label{Def_ptx}
p(t,\xb,A) := \P_{\xb}(\Xb(t) \in A), \; A \subseteq \Rnn,
\end{equation}  
where the subscript $ \xb $ on $ \P_{\xb} $ or $ \EXP_{\xb} $ denotes probability or expectation conditioned on $ \Xb(0) = \xb $. Let $ T_t[h](\xb) := \EXP_{\xb} \left[ h(\Xb(t)) \right] $ denote the  Markov semigroup associated to $ \Xb $. Then $ T_t $ has extended infinitesimal generator given by the non-local operator
\begin{equation}\label{Def_A}
\A[h](\xb) := \grad h(\xb) \cdot \Mb \xb - \lambda h(\xb)  + \lambda \int_{\Rn} h(\xb_1,\xb_2 + \Hb (\ab \circ \yb)) f_{\Pb}(\yb) \ud \yb,
\end{equation}  
for functions $ h $ in the domain $ \Dom(\A) $ of $ \A $, which includes all continuously differentiable and bounded functions from $ \Rnn $ to $ \R $ \cite[page 366]{davis1984piecewise}. The form of the infinitesimal generator $ \A $ in \eqref{Def_A} yields a second important characterization for our process: $ \Xb $ is a conservative Feller process of the \textit{Ornstein-Uhlenbeck Type} (OUT) as described in \cite{sato1984operator}. In this case, the associated L\'evy process of $ \Xb $ has no Brownian component, its jump measure is $ \lambda e^{-\lambda t} f_{\Yb}(\yb) \ud \yb \ud t $, and the driving matrix is $ - \Mb $ which has strictly positive eigenvalues.

The transition probabilities of OUT processes have been explicitly characterized, and those of $ \Xb $ exhibit one important caveat: they are not absolutely continuous with respect to Lebesgue measure. In fact, this can be directly seen from the strong Markov property, as
\begin{equation*}
	p(t,\xb,\ud \yb) = \bs{\delta}_{\vpb(t) \xb}(\ud \yb) e^{-\lambda t}+ \\
	\int_0^t \int_{\Rnn}\!\! \lambda e^{-\lambda \tau} \fY\!\left(\zb_2 - (\vpb(\tau)\xb)_2\right) p(\tau,\zb,\ud \yb) \ud \zb \ud \tau
\end{equation*}
where $ \bs{\delta} $ denotes the Dirac delta measure, and $ \vpb $ is as in \eqref{Def_flow}. Namely, starting at $ \xb $, the atomic event $ [\Xb(t) = \vpb(t) \xb] $ has positive probability.

\citet[Theorem 3.1] {sato1984operator} provide an explicit formula for the characteristic function of $ \Xb(t) $, which we include for completeness. Their formula and most of the subsequent analysis is given here in terms of the multidimensional spatial Laplace transforms of $ p(t,\xb,\cdot) $ and $ \fY $:
\begin{align}
	\tilde p(t,\xb,\sb)  &:= \int_{\Rnn} \!\! e^{-\yb \cdot \sb} p(t,\xb,\ud \yb) , \; \sb \in \Rnn, \\
	\fYt(\sb)  &:= \int_{\Rn} \!\! e^{-\yb \cdot \sb} \fY(\yb) \ud \yb = \fPt( \Hb \ab \circ \sb ), \; \sb \in \Rn. \label{Eq_fY2tilde}
\end{align} 
The Laplace transform of the transition probability density is:
\begin{equation}\label{Eq_ptxy}
\tilde p(t,\xb,\sb) = \exp \Bigg\{ -\vpb(t)\xb \cdot \sb  
-\frac{\lambda}{H_r} \int_{e^{-H_r t}}^1 \!\!   
\frac{1-\fYt((u^{-\Mb^{\tr}/H_r} \sb)_2 )}{u} \ud u \Bigg\},
\end{equation}  
where, for $ \sb \in \Rnn $ and $ u>1 $, the vector $ (u^{-\Mb^{\tr}/H_r} \sb)_2 $ is obtained by computing the matrix exponential $ \exp\left(\frac{\log u}{H_r} \Mb^{\tr}\right) $, right-multiplying by $ \sb $, and extracting the second half of the resulting vector. A more convenient formula is given in Corollary \ref{Cor_pQr} below.

\section{Invariant density of $ \Xb $}\label{Sec_invariant}

A probability density function $ g: \Rnn \to \R_+ $ is `invariant' or `stationary' for the process $ \Xb $ if
\begin{equation}\label{Def_Invariance}
\P_g (\Xb(t) \in A)  := \int_{\Rnn} g(\xb) p(t,\xb,A) \ud \xb 
= \int_A g(\xb) \ud \xb = \P_g(\Xb(0) \in A)
\end{equation} 
for all measurable $ A \subseteq \Rnn $ and $ t > 0 $. Note that for notational convenience and without risk of confusion, in \eqref{Def_Invariance} and below we are using the symbol $ g $ to denote both, a probability density and the measure determined by it. Also, the subscript $ g $ on $ \P_g $ or $ \EXP_g $ denotes probabilities or expectations with respect to the measure $ g $, namely conditioned on $ \Xb(0)$ distributed as $ g $. For example, taking expectations throughout the stochastic differential equation \eqref{Eq_SDEX} and using invariance in the form $ \deriv{}{t} \EXP_g \Xb(t) = 0 $, we get the invariant mean streamflow and runoff
\begin{equation}\label{Eq_EXPgX}
\EXP_g \Xb = -\lambda \, \Mb^{-1} \left[
\begin{array}{c}
\mathbf{0}\\ \Hb( \ab \circ \EXP \Pb_1)
\end{array}\right]
= \lambda \left[
\begin{array}{c}
\AG^{-1}(\ab \circ \EXP \Pb_1) \\ \ab \circ \EXP \Pb_1
\end{array}\right]
.
\end{equation}   

Conditions for the existence, and the characterization of invariant distributions for OUT processes are given in \cite[Theorems 4.1-4.2]{sato1984operator}. We now apply their result to $ \Xb $.

\begin{proposition}[\cite{sato1984operator}]\label{Prop_MainResult}
	A necessary and sufficient condition for the existence of a unique invariant measure for $ \Xb $  is 
	\begin{equation}\label{Eq_SatoCond}
	\int_{\yb \in \Rn: |\yb| \geq 1} \log(|\yb|) \fY(\yb) \ud \yb < \infty.
	\end{equation}  
	If \eqref{Eq_SatoCond} holds, then the invariant distribution has Laplace transform given by
	\begin{equation}\label{Eq_MainResult}
	\tilde g(\sb) = \exp\left\{ 
	-\frac{\lambda}{H_r} \int_0^{1} \!\!   
	\frac{1-\fYt((u^{-\Mb^{\tr}/H_r} \sb)_2 )}{u} \ud u
	\right\}.
	\end{equation}  
	Moreover, $ p(t,\xb, \cdot) $ converges to $ g $ as $ t \to \infty $ for all $ \xb \in \Rnn$.
\end{proposition}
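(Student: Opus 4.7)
The plan is to realize Proposition~\ref{Prop_MainResult} as a direct application of Theorems~4.1 and 4.2 of \cite{sato1984operator} to our specific block-structured process, supplemented by a limiting argument in the transition-probability formula \eqref{Eq_ptxy}. Concretely, I would (i) verify the stability hypothesis of Sato--Yamazato for $\Mb$, (ii) identify the L\'evy measure and translate their integrability criterion into \eqref{Eq_SatoCond}, and (iii) pass to $t\to\infty$ in \eqref{Eq_ptxy} to obtain simultaneously the formula \eqref{Eq_MainResult} and the weak convergence $p(t,\xb,\cdot)\Rightarrow g$.

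First I would show that $-\Mb$ has eigenvalues of strictly positive real part, which is the stability condition underlying \cite{sato1984operator}. Since $\Mb$ is block upper triangular, its spectrum equals that of $-\Kb\AG$ together with that of $-\Hb$. The diagonal matrix $\Hb$ has strictly positive entries $H_e$, and if the edges of $\Gamma$ are ordered so that each link precedes its tributaries, then $\AG$ is upper triangular with ones on the diagonal, whence the eigenvalues of $\Kb\AG$ are exactly the positive numbers $\{K_e : e\in \Gamma\}$. Second, the associated L\'evy process of $\Xb$ is compound Poisson with finite L\'evy measure $\lambda f_{\Yb}(\yb)\,\ud\yb$, supported on $\{\yb_1 = \mathbf{0}\}$. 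On that support $|\yb|$ and $|\yb_2|$ are comparable, so the Sato--Yamazato log-moment criterion on the L\'evy measure reduces immediately to \eqref{Eq_SatoCond}.

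Third, to derive \eqref{Eq_MainResult} I would let $t\to\infty$ in \eqref{Eq_ptxy}. Stability of $\Mb$ gives $\vpb(t)\xb\to\mathbf{0}$, so the drift term $\vpb(t)\xb\cdot\sb$ in the exponent vanishes, and $e^{-H_r t}\to 0$ enlarges the domain of integration to $(0,1]$. The pointwise limit is continuous in $\sb$ at $\sb=\mathbf{0}$ with value $1$, so the continuity theorem for multivariate Laplace transforms yields both weak convergence of $p(t,\xb,\cdot)$ to a probability measure $g$ and the identification of $\tilde g(\sb)$ with the right-hand side of \eqref{Eq_MainResult}; uniqueness of $g$ then follows from injectivity of the Laplace transform.

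The step I expect to demand the most care is the verification that the integral in \eqref{Eq_MainResult} converges absolutely under hypothesis \eqref{Eq_SatoCond}. The integrand contains the matrix exponential $u^{-\Mb^{\tr}/H_r}$, whose entries blow up as $u\downarrow 0$ along directions associated to the smallest eigenvalues of $\Mb$, so one must combine the elementary bound $1-\fYt(\sb)\leq \min(1,\,c|\sb|)$ with the log-moment assumption on $\fY$ to control the integrand near the origin. This is precisely the type of estimate used by Sato and Yamazato in their proof of Theorem~4.1, which can be transcribed here without essential modification.
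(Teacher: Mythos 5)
Your proposal is correct and takes essentially the same route as the paper, which gives no independent proof but simply invokes Theorems 4.1--4.2 of Sato and Yamazato; your two verifications --- that $\AG$ is unitriangular under a downstream-to-upstream ordering so that $-\Mb$ has spectrum $\{K_e\}\cup\{H_e\}\subset(0,\infty)$, and that the L\'evy measure $\lambda f_{\Yb}(\yb)\,\ud\yb$ is carried by $\{\yb_1=\mathbf{0}\}$ so the log-moment criterion collapses to \eqref{Eq_SatoCond} --- are exactly the checks needed to apply their result. One correction to your final paragraph: writing $u^{-\Mb^{\tr}/H_r}=e^{t\Mb^{\tr}}$ with $t=-\log(u)/H_r\to\infty$ as $u\downarrow 0$, stability of $\Mb$ makes the entries of this matrix \emph{decay to zero} rather than blow up (consistently with $M_r(0)=0$ in the proof of Proposition \ref{Prop_Tails}); the delicate point near $u=0$ is the competition between the factor $1/u$ and that decay rate, and the log-moment hypothesis is needed precisely because, absent a finite first moment of $\fY$, one cannot use a linear bound $1-\fYt(\sb)\leq c|\sb|$ and must instead integrate $\min(1,|\yb_2|\,|(u^{-\Mb^{\tr}/H_r}\sb)_2|)$ against $\fY$.
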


The random variable of most interest in $ \Xb(t) $ is its first entry $Q_r(t)$: the discharge at the watershed's outlet. Let $ g_r $ denote its invariant density
\begin{equation}
g_{r}(x) := \P_g(Q_r(t) \in \ud x), \quad x >0.
\end{equation}  
Crucially, the Laplace transform of $ g_r $ is easily obtained from that of $ g $ as
\begin{equation}
\gt_r(s) = \gt([s,0,\dots,0]^{\tr}).
\end{equation}  
We then have the following Corollary.

\begin{corollary}\label{Cor_pQr}
	The discharge $ Q_e $ has limiting invariant density $ g_e $ given by
	\begin{equation}\label{Eq_MainResultQe}
	\tilde g_e(s) = \exp\left\{ 
	-\frac{\lambda}{H_r} \int_0^{1} \!\!   
	\frac{1-\fPt( \Hb \ab \circ \mb_e(u) \, s )}{u} \ud u
	\right\}, \quad e \in \Gamma, \; s>0,
	\end{equation}  
	where $ \mb_e(u) $ is the second half  of the column of $ u^{-\Mb^{\tr}/H_r}  $ corresponding to $ Q_e $. 
\end{corollary}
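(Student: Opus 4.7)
The plan is to specialize Proposition~\ref{Prop_MainResult} to a Laplace argument $\sb$ supported on a single coordinate corresponding to $Q_e$, and then massage $\fYt$ into $\fPt$ via the identity \eqref{Eq_fY2tilde}. The only subtle point is bookkeeping: identifying which column of $u^{-\Mb^{\tr}/H_r}$ appears and which half of the resulting vector enters the formula.

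First, I would note that since $Q_e$ is the $e$-th entry of $\Xb_1$, its marginal Laplace transform is obtained from the joint Laplace transform $\tilde g$ by evaluating at $\sb = s \, \bs{\epsilon}_e$, where $\bs{\epsilon}_e \in \Rnn$ is the standard basis vector whose single nonzero entry equals $1$ at the index of $Q_e$ in $\xb_1$. That is,
\begin{equation*}
\tilde g_e(s) = \EXP_g \, e^{-s Q_e} = \tilde g (s \, \bs{\epsilon}_e).
\end{equation*}
This directly generalises the special case $\tilde g_r(s) = \tilde g([s,0,\dots,0]^{\tr})$ stated just before the corollary.

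Next, by linearity, $u^{-\Mb^{\tr}/H_r} (s \bs{\epsilon}_e) = s \cdot \big(\text{column of } u^{-\Mb^{\tr}/H_r} \text{ indexed by } e\big)$, so extracting the second half according to the decomposition \eqref{Def_Notationxs} yields, by the very definition of $\mb_e(u)$,
\begin{equation*}
\big(u^{-\Mb^{\tr}/H_r} s \bs{\epsilon}_e\big)_2 = s \, \mb_e(u).
\end{equation*}
Substituting into \eqref{Eq_MainResult} and invoking \eqref{Eq_fY2tilde} to convert $\fYt$ to $\fPt$ gives
\begin{equation*}
\fYt\!\big(s \, \mb_e(u)\big) = \fPt\!\big(\Hb \ab \circ \mb_e(u) \, s\big),
\end{equation*}
which is exactly the integrand in \eqref{Eq_MainResultQe}. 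Assembling the pieces yields the claimed formula.

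There is really no substantive obstacle here; the corollary is an evaluation of Proposition~\ref{Prop_MainResult} along a coordinate axis combined with the Hadamard-rescaling identity \eqref{Eq_fY2tilde}. The only care required is to be consistent about the ordering convention (root first in both $\xb_1$ and $\xb_2$) so that the index $e$ in $\mb_e(u)$ unambiguously refers to the column of $u^{-\Mb^{\tr}/H_r}$ associated with the streamflow coordinate $Q_e$ and not with $R_e$.
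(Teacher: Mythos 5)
Your proposal is correct and follows exactly the route the paper intends: the paper simply notes $\gt_r(s)=\gt([s,0,\dots,0]^{\tr})$ before the corollary and leaves the substitution into \eqref{Eq_MainResult} together with the conversion $\fYt(\sb)=\fPt(\Hb\ab\circ\sb)$ from \eqref{Eq_fY2tilde} implicit. Your careful bookkeeping of which column and which half of $u^{-\Mb^{\tr}/H_r}$ is extracted matches the paper's definition of $\mb_e(u)$.
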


Once $ \tilde g $ has been computed, the Laplace transform of the invariant distribution of any of the components of $ \Xb $ can be numerically inverted to get the corresponding approximate density function. See Figures \ref{Fig_simul1}b and \ref{Fig_fullOrder4}b for example. For all computations reported here, we use the classical algorithm by \cite{zakian1969numerical}.

\begin{figure}[h] \centering
	\hspace*{-0.5cm} \includegraphics[scale=1]{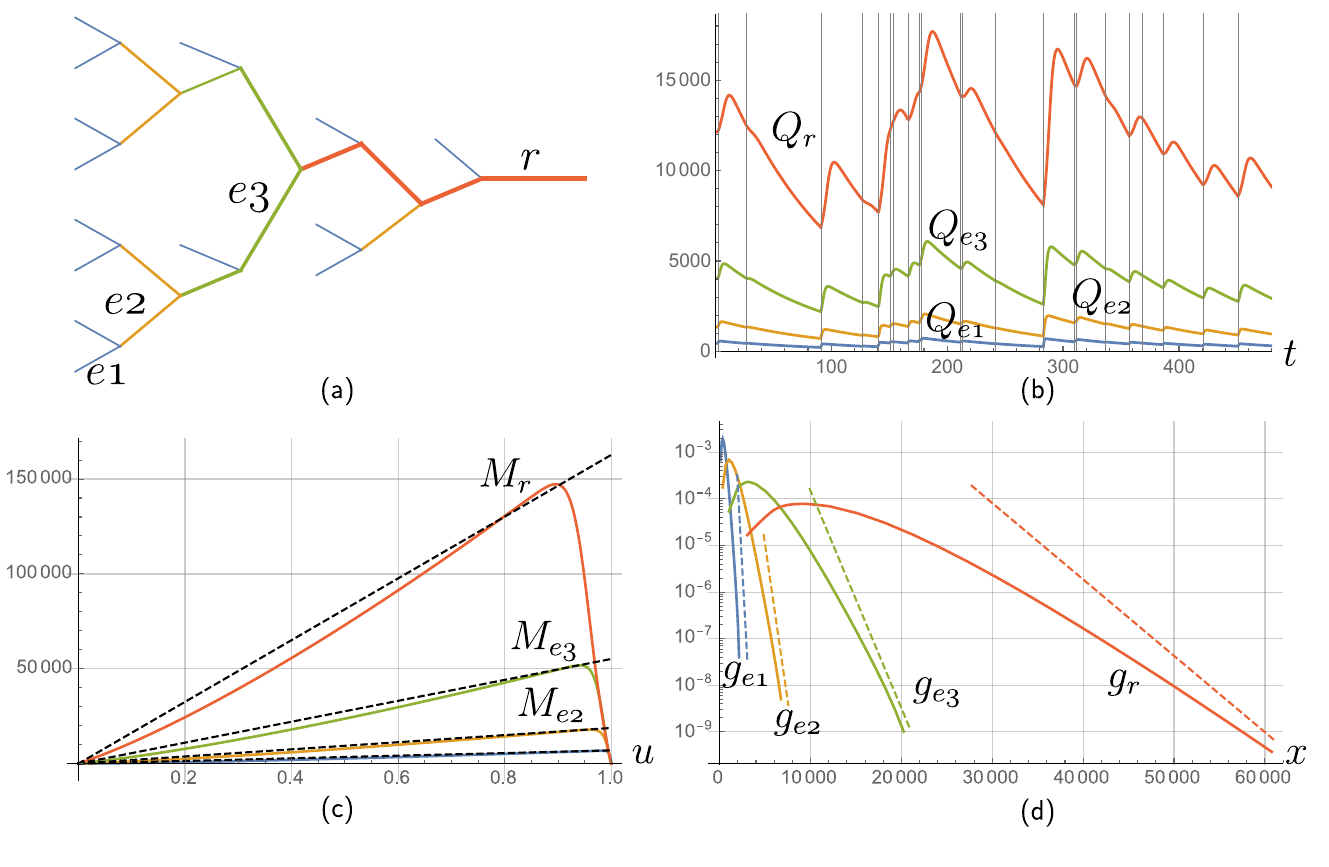}
	\caption{\small{
			Simulation and invariant densities for a watershed of fourth order. The precipitation field satisfies $ \lambda = \tfrac{1}{24} \SI{}{\per \hour}$, $ P_n = P_{n,e} \sim \exp(\sigma)$ with  $ \tfrac{1}{\sigma} = \SI{5}{\milli \meter}$. All hillslope areas were assumed equal to $ a_e = \SI{0.6}{\kilo\meter^2}$. Values of $ K_e, H_e $ were randomly generated as $ K_e = \epsilon_K K $, $ H_e = \epsilon_H H $ with $ H/K = 0.008$, $ \epsilon_K, \eps_H \sim \text{Unif}[0.5,1.5] $ (a) Structure of the river network with four selected stream links of increasing Horton order. (b) Simulated discharge $ Q_e(t) $ in \SI{}{\liter \per \second} at selected streams for $ t \in [0,20 \times 24\SI{}{\hour}] $. Vertical lines mark storm events. (c) Plots of $ M_e(u) :=  \Hb \ab \cdot \mb_e(u)$ compared with the linear approximation $ \Hb \ab \cdot \mb^0_e(u) $ in \eqref{Eq_m0}. (d) Logplots of the invariant densities $ g_e(x) $ for selected streams and $ x \in [0.25,6]\times \EXP_g Q_e $ \SI{}{\liter \per \second}. The straight dashed lines have the slope predicted in Proposition \ref{Prop_Tails}.
		}\label{Fig_fullOrder4}}
\end{figure}

\begin{remark}[Spatially uniform rainfall]\label{Rem_UnifRain}
	The case where $ P_{n,e} = P_n $ for all $ n \geq 1 $ and $ e \in \Gamma $, represents a scenario where on every storm, all hillslopes receive the same random amount $ P_n $ of rainfall. In this case it suffices to consider an i.i.d. sequence $ \{P_n: n \geq 1\} $ of rainfall depths with common probability density $ f_P $ instead of the joint density $ \fP $. The description of the process is obtained by replacing $ \Hb(\ab \circ \Pb_n) $ by $ \Hb \ab P_n $ in \eqref{Def_Yb}. Similarly, the integrands in \eqref{Eq_ptxy}, \eqref{Eq_MainResult} and \eqref{Eq_MainResultQe} must be modified by replacing Hadamard products with dot products: using expression \eqref{Eq_fY2tilde} for $ \fYt $, gives
	\begin{equation}\label{Eq_MainResultUR}
	\tilde g_e(s) = \exp\left\{ 
	-\frac{\lambda}{H_r} \int_0^{1} \!\!   
	\frac{1-\tilde f_P(\Hb \ab \cdot \mb_e(u) \, s )}{u} \ud u
	\right\}.
	\end{equation}
\end{remark}

\begin{remark}[The functions $ \mb_e $: calculation and approximation]
	The functions $ \mb_e $, $ e\in \Gamma $ in \eqref{Eq_MainResultQe} and \eqref{Eq_MainResultUR}  encapsulate the role played by the network geomorphology on the asymptotic distribution of discharge.  Some remarks are in order. Note first that the value of $ H_r $ in \eqref{Eq_MainResult} and \eqref{Eq_MainResultQe} can be changed to any $ H_e $ (or any other positive rate) by making the change of variables $ u \mapsto u^{H_e/H_r} $ in the integral. Secondly, if we denote the lower left block matrix of $ u^{-\Mb^{\tr}/H_r} $ by $ \mb(u) \in \R^{n_{\Gamma} \times n_{\Gamma}}$, then $ \mb_e(u) $ is the column of $ \mb(u) $ corresponding to link $ e $. By \eqref{Def_M} the matrix $ \mb(u) $ is given by
	\begin{equation}\label{Eq_mSeries}
	\mb(u) = \sum_{n=0}^{\infty}- \frac{\log(u)^n}{H_r^n n!}
	\left\{\Kb \left[\Ib - \AG \Kb \Hb \right]^{-1} \left[\Ib - (\AG \Kb \Hb^{-1})^n\right] \Hb^{n-1} \right\}^{\tr}.
	\end{equation}
	In particular, in the spatially uniform case where $ K_e = K_r $  and $ H_e = H_r $ for all $ e \in \Gamma $, one can write
	\begin{equation}\label{Eq_mdiag}
	\mb(u) = \left\{ \left[ \AG - \beta \Ib \right]^{-1} \left[u \Ib - u ^{\frac{1}{\beta} \AG }\right]\right\}^{\tr}, \quad \beta:= \frac{H_r}{K_r}.
	\end{equation}
	If $  H_e = H_r \ll K_r = K_e  $ for all $ e \in \Gamma $, the following linear approximation holds (see Figure \ref{Fig_fullOrder4}c)
	\begin{equation}\label{Eq_m0}
	\mb(u) \approx \lim_{\beta \to 0} \mb(u) = (\AG^{-1})^{\tr}u =: \mb^0(u)
	\end{equation} 
	As illustrated in Figure \ref{Fig_heterogeneous}, expression \eqref{Eq_mdiag} for homogeneous self-similar networks seems to yield self-similar invariant distributions for the discharge. Deviations from homogeneity in the parameters $ H_e, K_e $ show much richer behavior.
\end{remark}

\begin{figure}[h] \centering
	\hspace*{-0.5cm} \includegraphics[scale=1.05]{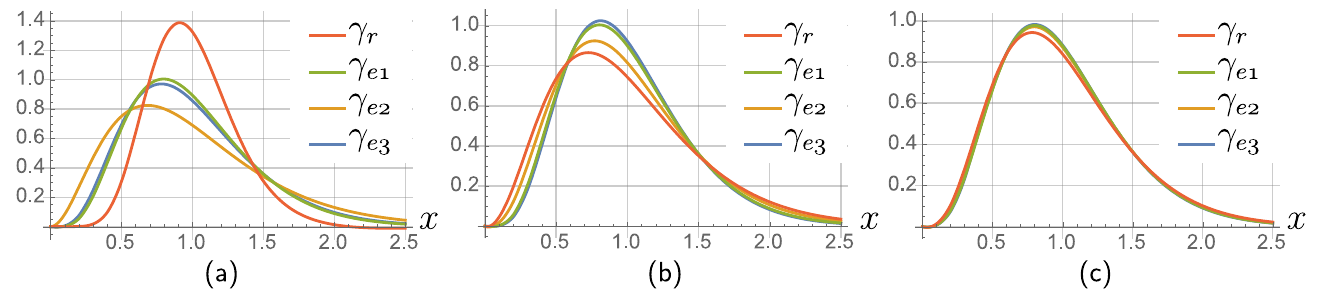}
	\caption{\small{
			(The effect of heterogeneity) Scaled densities $ \gamma_e $ of the normalized discharge $ Q_e/ \EXP_g Q_e $ for selected links of the river network and precipitation regime shown in Figure \ref{Fig_fullOrder4}.  The values of $ H_e $ and $ K_e $ were assigned as in Figure \ref{Fig_fullOrder4} but with random multipliers $ \epsilon $ of decreasing variance: a) $ \epsilon_K, \eps_H \sim \text{Unif}[0.25,1.75] $, b) $ \epsilon_K, \eps_H \sim \text{Unif}[0.5,1.5] $, c) $ \epsilon_K, \eps_H \sim \text{Unif}[0.9,1.1] $.
		}\label{Fig_heterogeneous}}
\end{figure}

\section{Moments and extreme events of $ Q_r $}\label{Sec_MomsExtreme}

In this section we exploit the properties of the Laplace transform of the density $ g_{r} $ to derive important characteristics of the invariant distribution of $ \Xb $ and of $ Q_r $. For simplicity, we restrict our attention to the case of uniform rainfall as described in Remark \ref{Rem_UnifRain}.

\subsection{Calculation of moments}\label{Sec_Moments}

Denote the $ n $-th invariant moment of $ Q_r $ by
\begin{equation}\label{Def_MnQr}
\Momr{n} := \EXP_g Q_r^n = (-1)^n \deriv[n]{\gt_r}{s}(0), \; n=1,2,\dots
\end{equation}  
If we denote the function inside the exponential in $ \eqref{Eq_MainResult} $ by
\begin{equation}
h(s) := - \frac{\lambda}{H_r}
\int_0^{1}\!\! \frac{1 - \tilde f_P(\Hb \ab \cdot \mb_r(u) s )}{u} \ud u,
\end{equation}  
then Faa di Bruno's formula for the $ n $-th derivative of $ e^{h(s)} $ gives
\begin{equation}\label{Eq_FaaDB}
\deriv[n]{\tilde g_r}{s} = \tilde g_r  \sum_{k=1}^{n} B_{n,k}\left(\left\{  \deriv[i]{h}{s} \right\}_{i=1}^{n-k+1} \right)
\end{equation}  
where $ B_{n,k} $ is the Bell polynomial
\begin{equation}\label{Def_BellPolys}
B_{n,k}(\{x_i\}_{i=1}^{n-k+1}) = n! \sum_{\boldsymbol{j} \in I(n,k)} 
\prod_{i=1}^{n-k+1} \dfrac{1}{j_i!}\left(\dfrac{x_i}{i}\right)^{j_i}.
\end{equation}  
Here, $ \bs{j}\ \in I(n,k) $ denotes that the sum is taken over all vectors $ \bs{j} = \{j_1,\dots,j_{n-k+1} \} $ of indices such that:
\begin{equation}\label{Def_BellIndices}
\sum_{i=1}^{n-k+1} j_i = k, \quad \sum_{i=1}^{n-k+1} i j_i = n.
\end{equation}  
Letting $ s \downarrow 0 $ in \eqref{Eq_FaaDB}, and writing $ \MomP{n} $ for the $ n $-th moment of $ P_1 $, one arrives at the following useful expression (see Figure \ref{Fig_moments} for a numerical example).

\begin{proposition}\label{Prop_Moments}
	The $ n $-th invariant moment of $ Q_r $ is
	\begin{equation}\label{Eq_Mom}
	\Momr{n} = (a K_r)^n \sum_{k=1}^{n} \left(\frac{\lambda}{H_r}\right)^k \!\! B_{n,k}\left( 
	\left\{ \MomP{i} c_i
	\right\}_{i=1}^{ n-k+1}
	\right )
	\end{equation}  
	where the coefficients $ c_i $ are given by
	\begin{equation}\label{Def_cn}
	c_\alpha := \int_0^{1} \dfrac{[\bs \beta \tilde\ab \cdot \mb_r(u)]^\alpha}{u} \ud u, \;\; \alpha>0, 
	\end{equation}  
	with $ \bs \beta := \left[\tfrac{H_e}{K_r}: e \in \Gamma \right]^{\tr} $ and $ \tilde \ab := \tfrac{1}{a} \ab$. 
\end{proposition}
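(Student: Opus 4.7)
The proof reduces to plugging into the Faà di Bruno expansion \eqref{Eq_FaaDB}, evaluating at $s=0$, and repackaging the resulting partial Bell polynomials via their standard homogeneity. First I would observe that $\tilde f_P(0) = 1$ makes the integrand in the definition of $h$ vanish at $s=0$, so $h(0) = 0$ and $\tilde g_r(0) = e^{h(0)} = 1$. Combined with \eqref{Def_MnQr} and \eqref{Eq_FaaDB}, this gives
\begin{equation*}
\Momr{n} \;=\; (-1)^n\,\tilde g_r^{(n)}(0) \;=\; (-1)^n \sum_{k=1}^{n} B_{n,k}\!\left(\left\{h^{(i)}(0)\right\}_{i=1}^{n-k+1}\right),
\end{equation*}
so it suffices to compute $h^{(i)}(0)$ and to simplify the Bell polynomials.

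Next I would differentiate $i$ times under the integral sign in the definition of $h$. The chain rule together with the moment identity $\tilde f_P^{(i)}(0) = (-1)^i\,\MomP{i}$ yields
\begin{equation*}
h^{(i)}(0) \;=\; \frac{\lambda}{H_r}\,(-1)^i\,\MomP{i} \int_0^1 \frac{[\Hb\ab\cdot\mb_r(u)]^i}{u}\,\ud u.
\end{equation*}
The identity $\Hb\ab = aK_r\,(\bs{\beta}\circ\tilde{\ab})$ is immediate from the definitions of $\bs{\beta}$ and $\tilde{\ab}$ in the statement; pulling the constant $aK_r$ out of the integral turns it into $(aK_r)^i c_i$, whence
\begin{equation*}
h^{(i)}(0) \;=\; (-1)^i\,(aK_r)^i\,\frac{\lambda}{H_r}\,\MomP{i}\,c_i.
\end{equation*}

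The final step is the homogeneity of the partial Bell polynomials. Substituting the above expression into \eqref{Def_BellPolys} and using the index constraints $\sum_i j_i = k$ and $\sum_i i\,j_i = n$ of \eqref{Def_BellIndices}, the factors $(-1)^{\sum i j_i}$, $(aK_r)^{\sum i j_i}$ and $(\lambda/H_r)^{\sum j_i}$ pull out of the summand to give
\begin{equation*}
B_{n,k}\!\left(\left\{h^{(i)}(0)\right\}\right) \;=\; (-1)^n (aK_r)^n (\lambda/H_r)^k\, B_{n,k}\!\left(\left\{\MomP{i} c_i\right\}\right).
\end{equation*}
The two factors of $(-1)^n$ cancel and \eqref{Eq_Mom} follows upon summing over $k$. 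The only mildly delicate point is justifying the differentiation under the integral; this is routine because the $i$-th $s$-derivative of the integrand is a polynomial in $s$ whose coefficients are continuous in $u$ and bounded by a finite multiple of $\MomP{i}$, and the resulting $u$-integrals are finite under the standing assumption that $c_i<\infty$ for $i\leq n$ (a condition on the network parameters through $\mb_r$) and that $P_1$ has finite moments up to order $n$.
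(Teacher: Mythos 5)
Your proposal is correct and follows essentially the same route as the paper, which itself only sketches the argument: apply the Fa\`a di Bruno expansion \eqref{Eq_FaaDB}, let $s \downarrow 0$ using $\tilde f_P^{(i)}(0) = (-1)^i \MomP{i}$, and extract the constants via the homogeneity of the partial Bell polynomials under the index constraints \eqref{Def_BellIndices}. Your computation of $h^{(i)}(0)$, the identification $\Hb\ab = aK_r(\bs\beta \circ \tilde\ab)$, and the sign bookkeeping are all consistent with the paper; the remark on differentiation under the integral is a detail the paper omits but does not change the approach.
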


\begin{remark}\label{Rem_cns}
	There are two important implications of the Proposition 
	\ref{Prop_Moments}. First, that under the invariant distribution, the discharge will have exactly as many moments as each $ P_n $. Secondly, the non-dimensional constants $ \{ c_n : n=1,2, \dots \} $  constitute a set of  parameters, depending only on the geomorphology of the watershed, that completely determine the invariant distribution of the discharge $ Q_r $. 
\end{remark}

\begin{figure}[h] \centering
	\includegraphics[scale=1.1]{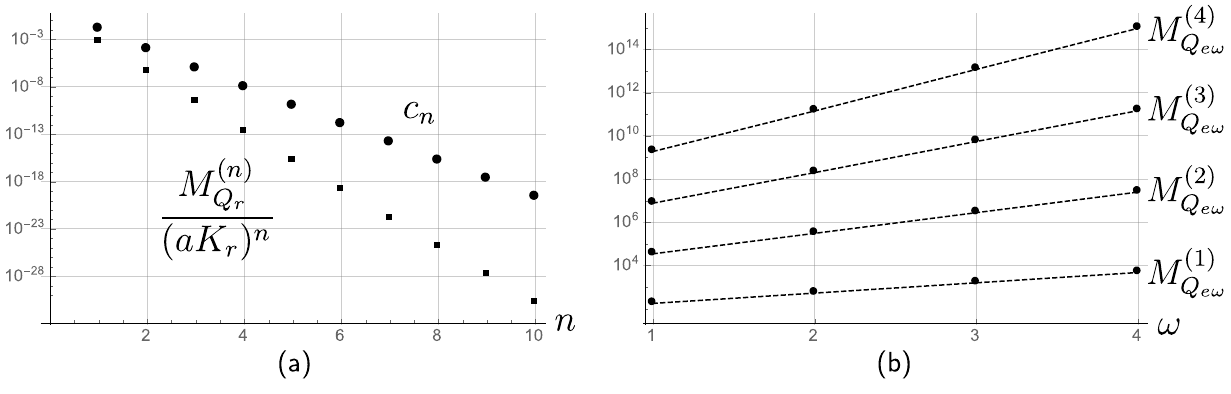} 
	\caption{\small{
			For the example of figure \ref{Fig_fullOrder4}: (a) logarithmic plots of the sequence of $ c_n $ (unit-less), and scaled moments $ \Momr{n}/(a K_r)^n $ in units of $ \text{m}^n $, $ n=1,\dots,10 $. (b) First four invariant moments of $ Q_e $ for the selected links in Figure \ref{Fig_fullOrder4} which have consecutive Horton order $ \omega = 1,2,3,4 $, $ \edge{4}:=r $. }
	}\label{Fig_moments}
\end{figure}

Direct integration of \eqref{Eq_Mom} shows that
\begin{equation}\label{Eq_EXPqQr}
c_1 = \dfrac{H_r}{K_r}, \;\; \EXP_g Q_r = a \lambda \EXP(P_1)
\end{equation}  
which coincides with the expression for $ \EXP_g \Xb $ in \eqref{Eq_EXPgX}, and makes explicit the fact that under the invariant distribution, the discharge is in a state of average equilibrium with the precipitation.

\subsection{Asymptotics of extreme events}\label{Sec_Tails}

We now turn our attention to the asymptotic behavior of the probabilities of extreme events of discharge, namely $ \P_g (Q_e > x) $  as $ x \to +\infty $. We are particularly interested on how this decay relates to both, the geomorphological properties of the network, and the probablity density $ f_P $ of rainfall. First of all, as noted in Remark \ref{Rem_cns}, if the distribution of $ P_1 $ has $ n $ finite moments, then it follows from Proposition \ref{Prop_Moments} that $ Q_r $ will also have exactly $ n $ finite moments. In that case, general theory \citep[see for example][Excercise 3.2.5]{chung2001course} gives 
\begin{equation}
	\lim_{x \to \infty } x^n \P_g(Q_r > x)  = 0.
\end{equation}

We now give precise results for the asymptotics of $ \P_g (Q_e > x) $ in two contrasting types of distributions of $ P_1 $: a heavy-tailed distribution with no moments, and the exponential distribution. In both cases, we proof that the invariant distribution of $ Q_e $ preserves the general asymptotic behavior as that of $ P_1 $. See Figure \ref{Fig_fullOrder4}d.

\begin{proposition}\label{Prop_TailsPot}
	Suppose $ P_1 \sim \text{Pareto}(\alpha, k) $ for some $k >0$ and $ 0<\alpha<1$. Then
	\begin{equation}\label{Eq_ExpTailsPot}
	\P_g(Q_r > x) \sim \frac{\lambda (k a K_r)^\alpha}{H_r} c_\alpha \, x^{-\alpha } \text{ as } x \to \infty 
	\end{equation}
	where $ c_\alpha $ is given by \eqref{Def_cn}.
\end{proposition}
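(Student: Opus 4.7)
The approach is to exploit the explicit formula \eqref{Eq_MainResultUR} for $\tilde g_r$ and apply the Karamata--Feller Tauberian theorem: for $0 < \alpha < 1$, $1 - \tilde g_r(s) \sim C s^\alpha$ as $s \downarrow 0$ is equivalent to $\P_g(Q_r > x) \sim \frac{C}{\Gamma(1-\alpha)}\, x^{-\alpha}$ as $x \to \infty$. So the problem reduces to pinning down the leading coefficient of $1 - \tilde g_r$ at the origin.

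First, for $P_1 \sim \mathrm{Pareto}(\alpha,k)$ with $0 < \alpha < 1$, the tail $\P(P_1 > x) \sim k^\alpha\, x^{-\alpha}$ is regularly varying, so the Tauberian theorem in the forward direction gives
\begin{equation*}
1 - \tilde f_P(z) \sim \Gamma(1-\alpha)\, k^\alpha\, z^\alpha, \qquad z \downarrow 0.
\end{equation*}
Setting $h(s) := -\log \tilde g_r(s) = \frac{\lambda}{H_r}\int_0^1 \frac{1 - \tilde f_P(\Hb\ab\cdot\mb_r(u)s)}{u}\,\ud u$ and substituting this expansion, I would establish by dominated convergence that
\begin{equation*}
\lim_{s\downarrow 0} \frac{h(s)}{s^\alpha} = \frac{\lambda}{H_r}\,\Gamma(1-\alpha)\, k^\alpha \int_0^1 \frac{(\Hb\ab\cdot\mb_r(u))^\alpha}{u}\,\ud u = \frac{\lambda(k a K_r)^\alpha}{H_r}\,\Gamma(1-\alpha)\, c_\alpha,
\end{equation*}
where the second equality uses $\Hb\ab = aK_r(\bs{\beta}\circ\tilde\ab)$ together with \eqref{Def_cn}. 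Since $h(s)\to 0$, one has $1 - \tilde g_r(s) = 1 - e^{-h(s)} \sim h(s)$, so the constant $C := \frac{\lambda(kaK_r)^\alpha}{H_r}\Gamma(1-\alpha)c_\alpha$ governs the Laplace transform near zero. The reverse Tauberian implication then yields \eqref{Eq_ExpTailsPot} after the $\Gamma(1-\alpha)$ factor cancels.

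The main obstacle is the dominated-convergence step. I need (i) $c_\alpha < \infty$ and (ii) a uniform-in-$s$ envelope for the integrand $(1 - \tilde f_P(\Hb\ab\cdot\mb_r(u)s))/(u s^\alpha)$. For (i): the eigenvalues of $-\Mb^\tr/H_r$ are the positive numbers $K_e/H_r$ and $H_e/H_r$, so $\|u^{-\Mb^\tr/H_r}\| = O(u^\gamma)$ as $u\downarrow 0$ with $\gamma := \min_e\{K_e,H_e\}/H_r > 0$; combined with $\mb_r(1)=\mathbf{0}$ (because $u^{-\Mb^\tr/H_r}|_{u=1}$ is the identity, whose lower-left block vanishes), this gives $(\Hb\ab\cdot\mb_r(u))^\alpha/u = O(u^{\alpha\gamma - 1})$ at $0$ and $O((1-u)^\alpha)$ at $1$, hence integrable. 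For (ii): the continuous function $z\mapsto (1-\tilde f_P(z))/z^\alpha$ has finite limits at both $0$ (namely $\Gamma(1-\alpha)k^\alpha$) and $\infty$ (namely $0$, since $\tilde f_P$ decays at infinity), so it is bounded by some constant $C'$; therefore the integrand is dominated uniformly in small $s$ by $C'(\Hb\ab\cdot\mb_r(u))^\alpha/u$, whose integral is finite by (i). Once the interchange is justified the proof concludes as above.
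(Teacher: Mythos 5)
Your proof is correct and follows essentially the same route as the paper's: expand $1-\tilde f_P$ near the origin to get $1-\tilde g_r(s)\sim C s^{\alpha}$ with $C=\frac{\lambda (k a K_r)^{\alpha}}{H_r}\Gamma(1-\alpha)c_\alpha$, then invoke the Karamata Tauberian theorem together with the monotone density ("differentiation") step. The only difference is that you supply the dominated-convergence and integrability details ($c_\alpha<\infty$, the uniform envelope for $(1-\tilde f_P(z))/z^{\alpha}$) that the paper leaves implicit; these are sound, modulo the harmless possibility of logarithmic corrections to the bound $\|u^{-\Mb^{\tr}/H_r}\|=O(u^{\gamma})$ when $-\Mb$ has repeated eigenvalues, which does not affect integrability.
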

 \begin{proof}
 	
 \end{proof}
	In this case $ f_P(x) \sim x^{-1-\alpha}$ as $ x \to \infty $. Equation \eqref{Eq_Mom} along with the expansion of $ \fPt $ in Taylor series, yields 
	\[  \gt_r \sim \exp(-C s^\alpha) \text{ where } C=\frac{\lambda}{H_r}k^\alpha \Gamma(1-\alpha) \int_0^1 \frac{(\Hb \ab \cdot \mb_r(u))^\alpha}{u} \ud u  \]
	as $ s \downarrow 0 $. Denote $ \Psi(x):=\P(Q_r > x) $. Then $ \tilde{\Psi}(s) =\frac{1}{s}(1-\gt(s))\sim C s^{\alpha - 1}  $ as $ s \downarrow 0 $. The Karamata Tuberian theorem gives that $ \int_0^x \Psi(y) \ud y \sim \frac{C}{\Gamma(2-\alpha)}x^{1-\alpha} $ as $ x \to \infty $. Differentiation yields the desired asymptotics for $ \Psi(x) $. See \citet{bingham1989regular}, Theorems 1.7.1, 1.7.2.

\begin{proposition}\label{Prop_Tails}
	Suppose $ P_1 \sim \exp(\sigma) $ for $ \sigma >0 $. Let $ M_r^* =\max\{ \Hb \ab \cdot \mb_r(u): u \in [0,1] \} $, then
	\begin{equation}\label{Eq_ExpTails}
	\lim_{x \to \infty} \dfrac{1}{x} \log \P_g(Q_r > x) = -\dfrac{\sigma}{M_r^*}. 
	\end{equation}  
\end{proposition}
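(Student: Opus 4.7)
My plan is to establish matching upper and lower bounds on $\frac{1}{x}\log \P_g(Q_r > x)$, both equal to $-\sigma/M_r^*$. Abbreviate $M(u) := \Hb\ab\cdot\mb_r(u)$, so $M_r^* = \max_{u\in[0,1]}M(u)$. For $P_1\sim\exp(\sigma)$ the Laplace transform is $\tilde f_P(w)=\sigma/(\sigma+w)$, and Remark~\ref{Rem_UnifRain} gives, after substituting $-s$ for $s$,
\begin{equation*}
\gt_r(-s) = \exp\left\{\frac{\lambda}{H_r}\int_0^1\frac{M(u)\,s}{u\,(\sigma-M(u)\,s)}\,\ud u\right\}, \qquad s\geq 0.
\end{equation*}
The first step is to verify $\gt_r(-s)<\infty$ for every $s<\sigma/M_r^*$: the denominator is then bounded below by a positive constant, and near $u=0$ the matrix $u^{-\Mb^\tr/H_r}$ decays like $u^{\rho}$ with $\rho=\min_e\{K_e,H_e\}/H_r>0$ (the smallest eigenvalue of $-\Mb/H_r$, computable from the block structure of $\Mb$), making the integrand $O(u^{\rho-1})$ and integrable. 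Markov's inequality then yields $\P_g(Q_r>x)\leq e^{-sx}\gt_r(-s)$, so $\limsup_{x\to\infty}\tfrac{1}{x}\log\P_g(Q_r>x)\leq -s$; letting $s\uparrow \sigma/M_r^*$ closes the upper bound.

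For the matching lower bound I would argue directly from the stationary dynamics, sidestepping any series representation of $Q_r$. Because $M$ is continuous on $[0,1]$ with $M(0)=M(1)=0$ (the second equality since $u^{-\Mb^\tr/H_r}|_{u=1}=\Ib$, whose lower-left block is zero), $M_r^*$ is attained at some interior $u^*\in(0,1)$, corresponding in the time variable $v:=-\log(u)/H_r$ to $v^*\in(0,\infty)$. Set $\phi(v):=M(e^{-H_r v})$ and fix $\delta\in(0,M_r^*)$. Choose a compact neighborhood $B\subset(0,\infty)$ of $v^*$ on which $\phi>M_r^*-\delta$, and pick any $T>\sup B$. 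The event $A:=\{N(T)=1,\;T-T_1\in B\}$ has positive probability and is independent of $\Xb(0)\sim g$ and of $P_1\sim\exp(\sigma)$. On $A$, the solution formula~\eqref{Eq_SolXt} (uniform-rainfall case) yields
\begin{equation*}
Q_r(T) = \left(e^{\Mb T}\Xb(0)\right)_1 + \phi(T-T_1)\,P_1 \;\geq\; (M_r^*-\delta)\,P_1,
\end{equation*}
the inequality using that $e^{\Mb t}$ preserves $\Rnn$ because the off-diagonal entries of $\Mb$ are nonnegative (Metzler property). Stationarity $Q_r(T)\eqdstn Q_r$ together with independence then gives
\begin{equation*}
\P_g(Q_r>x) \geq \P(A)\,\P(P_1>x/(M_r^*-\delta)) = \P(A)\,e^{-\sigma x/(M_r^*-\delta)},
\end{equation*}
hence $\liminf_{x\to\infty}\frac{1}{x}\log\P_g(Q_r>x)\geq -\sigma/(M_r^*-\delta)$, and sending $\delta\downarrow 0$ completes the proof.

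The only genuinely analytic input is the boundary behavior of $M$: integrability of the Laplace transform at $u=0$ (from positivity of the smallest eigenvalue of $-\Mb/H_r$) and interiority of the argmax $u^*$ (from $M(1)=0$, immediate from the block structure of $\Mb$). Everything else pairs a standard Chernoff upper bound with a single-storm lower estimate, with a mild book-keeping subtlety only in extending $\gt_r$ to negative Laplace argument, justified by analytic continuation inside the strip of convergence of the MGF of $Q_r$.
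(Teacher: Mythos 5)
Your argument is correct, but it reaches the conclusion by a genuinely different route than the paper. Both proofs begin with the same geometric facts about $M_r(u)=\Hb\ab\cdot\mb_r(u)$ --- nonnegativity from the Metzler structure of $\Mb$, the boundary values $M_r(0)=M_r(1)=0$, hence an interior maximizer --- but from there the paper stays entirely on the transform side: it shows the explicit formula for $\gt_r$ is finite for $s>-\sigma/M_r^*$ and blows up as $s\downarrow-\sigma/M_r^*$, identifying $-\sigma/M_r^*$ as the abscissa of convergence, and then cites the Tauberian-type theorem of Nakagawa to get the two-sided limit \eqref{Eq_ExpTails}. Your Chernoff bound is essentially the elementary half of that Tauberian statement, and you correctly flag that the only delicate point is extending $\gt_r$ to negative arguments; the clean justification is Landau's theorem (the real point of the axis of convergence of the Laplace transform of a nonnegative random variable is a singularity of the function), since the explicit formula continues analytically past any putative abscissa to the right of $-\sigma/M_r^*$ --- one cannot literally invoke ``the strip of convergence of the MGF'' before it has been identified. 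Where you genuinely diverge is the lower bound: instead of a Tauberian converse you condition on a single storm landing in a time window where your $\phi(v)=M_r(e^{-H_r v})$ exceeds $M_r^*-\delta$, discard the initial-condition term by positivity of $e^{\Mb t}$, and combine stationarity with the exact exponential tail of $P_1$. This buys robustness: the singularity of $\gt_r$ at $-\sigma/M_r^*$ is not a pole (near the interior maximizer the exponent grows like a constant times $(\sigma+M_r^*s)^{-1/2}$), so checking the hypotheses of the cited Tauberian theorem requires some care, whereas your probabilistic estimate needs only $\P(A)>0$ and is immune to such issues. The cost is that your lower bound exploits the compound-Poisson structure of the driving noise rather than following from transform data alone, so it generalizes less readily to other driving L\'evy processes.
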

\begin{proof}
	The tails of $ f_P(y) $ decay exponentially which is manifested in $ \tilde f_P(s) = \sigma/(s+\sigma) $ as a pole at $ s = -\sigma $. We now show the same a pole also exists for $ \gt_r $. Note first that for $ u \in [0,1] $, the matrix $ -\frac{\log u}{H_r}\Mb^{\tr} $ has non-negative off-diagonal entries, and therefore every entry of $ u^{-\Mb^{\tr}/H_r} $ is non-negative. This implies that $M_r(u):= \Hb\ab \cdot \mb_r(u) $, $ u\in [0,1] $ is a non-negative function with $ M_r(0) = M_r(1) = 0 $. See Figure \ref{Fig_fullOrder4}d. Moreover by \eqref{Eq_mSeries}, $ M_r $ is a bounded and differentiable function of $ u $ with a positive maximum at $ u =u^* \in (0,1) $ where $ M_r'(u^*) = 0 $. The convergence of $ \int_0^1 \tfrac{1}{u} (1-\fPt(M_r(u)s)) \ud u $ for $ s>0 $ is guaranteed by Proposition \ref{Prop_MainResult}. For $ -\tfrac{\sigma}{M_r^*} <s \leq 0 $ the following estimate holds
	\[  \int_0^1 \tfrac{1}{u} (1-\tilde f_P(M_r(u)s)) \ud u \geq \dfrac{s M_r^*}{u^*} \log \left(1+\dfrac{u^*}{\sigma + s M_r^*}\right)  \]
	and therefore $ \gt_r(s) \to \infty $ as $ s \downarrow -\sigma/M_r^* $. The asymptotic formula \eqref{Eq_ExpTails} now follows from \citet[Theorem 3]{nakagawa2005tail}
\end{proof}

\section{Conclusion and outlook}\label{Sec_Discussion}

In this work we have presented the detailed mathematical solution for the equations \eqref{Eq_ODEsQ}-\eqref{Eq_ODEsR} of mass and momentum balance in an arbitrary watershed at the hillslope scale. The solution covers the deterministic case through the global hydrograph map $ \vpb $ in \eqref{Def_flow}, as well as the case of rainfall given by a Poisson point process. In its most basic form, our main result gives an approximation for the distribution of runoff and streamflow within a watershed given the geometry of the river network and a set of hillslope-scale physical parameters.

An immediate consequence of Proposition \ref{Prop_MainResult} is that $ \Xb $ is an `ergodic' process (see \cite[Section 20]{Kallenberg} for Feller ergodic  processes, or \cite{Costa:2008aa} for the specific case of ergodic PDMPs). It follows in particular, that 
\begin{equation}\label{Eq_ergodic}
\lim_{t \to \infty} \frac{1}{t} \int_0^t f(\Xb(s)) \ud s = \lim_{t \to \infty} T_t[f](\xb) =  \int_{\Rnn} f(\xb)  g(\xb) \ud \xb
\end{equation}  
$ g $-almost surely for all suitable $ f: \Rnn \to \R $ and any initial condition $ \Xb(0) = \xb \in \Rnn$. If the statistical properties of the precipitation field do not change for a sufficiently long period of time, our model predicts that the watershed will attain a statistical invariant regime determined by $ g $.

On a final comment, and as thoroughly explained in \cite{Gupta:2007he}, it should be noted that solutions to \eqref{Eq_ODEsQ}-\eqref{Eq_ODEsR} can be used to study the phenomenon of statistical scaling in watersheds: power law relationships between the frequency and magnitude of streamflow, and physical parameters of the watershed. The mathematical framework developed here provides a concrete technical base to undertake such scaling analysis as illustrated in Figures \ref{Fig_heterogeneous} and \ref{Fig_moments}b. We thought however that this present note should restrict its focus to the mathematical details of the solution to the model. Results on the scaling properties of the invariant distribution of $ \Xb $, its moments and tail behavior, will be included in a separate note.

\section*{Acknowledgments}
	All relevant numerical calculations and examples included in this manuscript were performed in Mathematica\texttrademark and an interactive notebook is available upon request to the corresponding author. 
	This research was funded by The British Council, through a Researchers Link grant to The University of Liverpool and Universidad Nacional de Colombia. The project also benefited from funding from the European Union's Seventh Framework Programme for research, technological development and demonstration under grant agreement number 318984-RARE.
	The authors would also like to thank several people whose comments and suggestions made this work possible. Including Professors Oscar Mesa, Germ\'an Poveda and Carlos Hoyos  (Universidad Nacional de Colombia), Alfred Muller (Siegen University), Junyi Guo (Nankai University), Tomasz Kozubowski (University of Nevada), Manuel Correa (Integral S.A., Colombia), Vijay Gupta (University of Colorado, Boulder), Edward Waymire (Oregon State University) and Jesus G\'omez (New Mexico Tech).


\bibliographystyle{abbrvnat}
\bibliography{DMP_Floods}

\end{document}